\font\smallit=cmti10
\renewcommand\section{\@startsection {section}{1}{\z@}
	{-30pt \@plus -1ex \@minus -.2ex}
	{2.3ex \@plus.2ex}
	{\normalfont\normalsize\bfseries\boldmath}}
\renewcommand\subsection{\@startsection{subsection}{2}{\z@}
	{-3.25ex\@plus -1ex \@minus -.2ex}
	{1.5ex \@plus .2ex}
	{\normalfont\normalsize\bfseries\boldmath}}
\renewcommand{\@seccntformat}[1]{\csname the#1\endcsname. }
\newtheorem{theorem}{Theorem}
\newtheorem{lemma}{Lemma}
\newtheorem{proposition}{Proposition}
\theoremstyle{definition}
\begin{document}
	
	\begin{center}
		\textbf{\large An improved threshold for the number of distinct intersections of intersecting families} 
		\vskip 20pt
		\textbf{Jagannath Bhanja, Sayan Goswami}\\
		{\smallit The Institute of Mathematical Sciences, A CI of Homi Bhabha National Institute, C.I.T. Campus, Taramani, Chennai 600113, India}\\
		{\tt jbhanja@imsc.res.in, sayangoswami@imsc.res.in}
	\end{center}
	\vskip 20pt

	\begin{abstract}
	 A family $\mathcal{F}$ of subsets of $\{1,2,\ldots,n\}$ is called a $t$-intersecting family if $|F\cap G| \geq t$ for any two members $F, G \in \mathcal{F}$ and for some positive integer $t$. If $t=1$, then we call the family $\mathcal{F}$ to be intersecting. Define the set $\mathcal{I}(\mathcal{F}) = \{F\cap G: F, G \in \mathcal{F} \text{ and } F \neq G\}$ to be the collection of all distinct intersections of $\mathcal{F}$. Frankl et al. proved an upper bound for the size of $\mathcal{I}(\mathcal{F})$ of intersecting families $\mathcal{F}$ of $k$-subsets of $\{1,2,\ldots,n\}$. Their theorem holds for integers $n \geq 50 k^2$. In this article, we prove an upper bound for the size of $\mathcal{I}(\mathcal{F})$ of $t$-intersecting families $\mathcal{F}$, provided that $n$ exceeds a certain number $f(k,t)$. Along the way we also improve the threshold $k^2$ to $k^{3/2+o(1)}$ for the intersecting families.   
	\end{abstract}
	
	\vspace{10pt}
	
	\noindent {\bf 2020 Mathematics Subject Classification:} 05D05
	
	\vspace{10pt}
	\noindent {\bf Keywords:} $t$-intersecting family, set intersection, Erd\H{o}s-Ko-Rado theorem

	\section{Introduction}
	We denote the standard $n$-element set $\{1,2,\ldots,n\}$ by $[n]$, the set of all subsets of $[n]$ by $2^{[n]}$, and for $0 \leq k \leq n$ the collection of all $k$-element subsets of $[n]$ by $\binom{[n]}{k}$. We use the standard notation $|S|$ for the cardinality of a set $S$. We also use the following standard analytical notations. For non-negative functions $f,g$, we write $g = \mathcal{O}(h)$ to mean that $g(x) \leq c \cdot h(x)$ for some positive constant $c$; $g = o(h)$ to mean that $\frac{g(x)}{h(x)} \to 0$; $g = \Theta(h)$ to mean that $g=\mathcal{O}(h)$ and $h=\mathcal{O}(g)$.
	
	We call a family $\mathcal{F} \subset 2^{[n]}$ to be intersecting if the intersection of any two members of $\mathcal{F}$ is non-empty, and we call $\mathcal{F}$ to be $t$-intersecting if $|F\cap G| \geq t$ for any two members $F, G \in \mathcal{F}$ and for some positive integer $t$. We call a family to be the complete sunflower if every subset which contain a fix $t$-set (say) $X$ is a member of that family; we denote such a family by $S_X$. Any subset of $S_X$ is called Sunflower.
	
	The Erd\H{o}s-Ko-Rado theorem \cite{erdos} is a pioneer result in extremal combinatorics. 
    
    \begin{theorem} [Erd\H{o}s-Ko-Rado theorem] \label{ekr-t-theorem} There exists some $n_0(k, t)$ such that if $n \geq n_0(k, t)$ and $\mathcal{F} \subset \binom{[n]}{k}$ is $t$-intersecting, then  
    	\[|\mathcal{F}| \leq \binom{n-t}{k-t}.\]
    \end{theorem}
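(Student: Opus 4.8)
The plan is to establish this classical Erd\H{o}s--Ko--Rado theorem by the sunflower (kernel) method, which yields an admissible---if far from optimal---value of $n_0(k,t)$. One may assume $k>t$ (for $k=t$ every $t$-intersecting family of $k$-sets has a single member and $\binom{n-t}{0}=1$) and $n\ge k$. The argument rests on a dichotomy: either all members of $\mathcal{F}$ contain a common $t$-set, or $|\mathcal{F}|$ is of strictly smaller order than $\binom{n-t}{k-t}$. In the first case, if a $t$-set $T$ lies in every $F\in\mathcal{F}$ then $\mathcal{F}\subseteq S_T$ and $|\mathcal{F}|\le\binom{n-t}{k-t}$; so the whole content is to bound $|\mathcal{F}|$ when \emph{no} $t$-set lies in all of its members.

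In that case I would first extract a large sunflower. If $|\mathcal{F}|>k!\,k^{k}$, the sunflower lemma of Erd\H{o}s and Rado yields distinct sets $F_1,\dots,F_{k+1}\in\mathcal{F}$ with a common core $Y$ (so $F_i\cap F_j=Y$ for all $i\ne j$). Since $\mathcal{F}$ is $t$-intersecting, $|Y|\ge t$, and since the $F_i$ are distinct $k$-sets one has $|Y|\le k-1$, so the petals $P_i:=F_i\setminus Y$ are nonempty and pairwise disjoint. For any $G\in\mathcal{F}$ one has $|G\cap F_i|\ge t$ and hence $|G\cap P_i|\ge t-|G\cap Y|$ for each $i$; if $|G\cap Y|\le t-1$ this would force $G$ to meet all $k+1$ disjoint petals, giving $|G|\ge k+1$, which is absurd. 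Therefore every member of $\mathcal{F}$ meets $Y$ in at least $t$ points, and since $|Y|=t$ would make $Y$ a common $t$-set we conclude $t+1\le s:=|Y|\le k-1$.

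Next I would partition $\mathcal{F}$ by the trace on $Y$. The members $F$ with $|F\cap Y|\ge t+1$ all lie in $\bigcup_{A\in\binom{Y}{t+1}}S_A$, so there are at most $\binom{s}{t+1}\binom{n-t-1}{k-t-1}=\mathcal{O}_{k,t}(n^{k-t-1})$ of them. For a fixed $t$-subset $Z\subseteq Y$ write $\mathcal{F}_Z:=\{F\in\mathcal{F}:F\cap Y=Z\}$, so $\mathcal{F}_Z\subseteq S_Z$. Since $Z$ is not contained in every member, there is $G_Z\in\mathcal{F}$ with $|Z\cap G_Z|\le t-1$; every $F=Z\cup W\in\mathcal{F}_Z$ satisfies $|F\cap G_Z|\ge t$, which forces the $(k-t)$-set $W\subseteq[n]\setminus Z$ to meet the fixed set $G_Z\setminus Z$ in at least $t-|Z\cap G_Z|\ge1$ point; hence $|\mathcal{F}_Z|\le k\binom{n-t-1}{k-t-1}=\mathcal{O}_{k,t}(n^{k-t-1})$. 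Summing over the at most $\binom{s}{t}$ choices of $Z$ gives $|\mathcal{F}|=\mathcal{O}_{k,t}(n^{k-t-1})$.

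Putting the cases together: if $\mathcal{F}$ has no common $t$-set then $|\mathcal{F}|\le\max\{k!\,k^{k},\,C_{k,t}\,n^{k-t-1}\}$ for some constant $C_{k,t}$, and since $\binom{n-t}{k-t}=\Theta_{k,t}(n^{k-t})$ with $k-t\ge1$, it suffices to choose $n_0(k,t)$ so large that this maximum is below $\binom{n-t}{k-t}$ for $n\ge n_0(k,t)$; with the trivial case this yields $|\mathcal{F}|\le\binom{n-t}{k-t}$. I expect the trace partition to be the delicate step: the naive bound there is only $\binom{s}{t}\binom{n-t}{k-t}$, larger than the target by the factor $\binom{s}{t}$, and it is exactly the observation that \emph{every} potential core $Z$ is avoided by some member---so that each trace class is individually of the lower order $n^{k-t-1}$---that collapses the estimate to the right order of magnitude.
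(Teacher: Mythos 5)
Your argument is correct. Note first that the paper does not prove this statement at all: it is quoted as the classical Erd\H{o}s--Ko--Rado theorem, with the sharp threshold $n_0(k,t)=(t+1)(k-t+1)$ attributed to Frankl (for $t\ge 15$, via the shifting/compression technique) and to Wilson (for all $t$, via an algebraic eigenvalue argument). Your sunflower--kernel proof is a genuinely different and more elementary route that establishes exactly the qualitative statement asked for -- the existence of \emph{some} $n_0(k,t)$ -- at the cost of a far weaker threshold (your $n_0$ must absorb both $k!\,k^k$ and the implied constant $C_{k,t}$, so it is super-exponential in $k$ rather than quadratic). The individual steps all check: the Erd\H{o}s--Rado lemma applies once $|\mathcal{F}|>k!\,k^k$ since a $(k+1)$-petal sunflower needs only that many sets of size at most $k$; the disjointness of the $k+1$ petals correctly forces $|G\cap Y|\ge t$ for every $G$, and $|Y|\ge t+1$ in the non-sunflower case; and the key observation in the trace partition -- that each candidate core $Z\in\binom{Y}{t}$ is missed by some $G_Z$, pinning $F\setminus Z$ to the $\mathcal{O}_{k,t}(1)$-element set $G_Z\setminus Z$ and dropping each class to order $n^{k-t-1}$ -- is exactly what is needed to beat the target $\binom{n-t}{k-t}=\Theta_{k,t}(n^{k-t})$. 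What the cited proofs buy over yours is the optimal range of $n$, which matters elsewhere in the literature but not for the theorem as stated here; what yours buys is self-containedness and transparency about where the extremal configuration (the complete sunflower $S_T$) enters.
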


    Frankl \cite{frankl78} proved that the Erd\H{o}s-Ko-Rado theorem holds for $n_0(k, t)=(t+1)(k-t+1)$ and $t\geq 15$. Wilson \cite{wilson} proved the theorem holds with same $n_0(k, t)$ and for all $t$. The bound in the Erd\H{o}s-Ko-Rado theorem is achieved for the complete sunflower $\mathcal{F}$.
    
    Let $\mathcal{I}(\mathcal{F}) = \{F\cap G: F, G \in \mathcal{F} \text{ and } F \neq G\}$ be the collection of all distinct intersections of $\mathcal{F}$. In \cite{frankl22} Frankl, Kiselev, and Kupavskii proved the following theorem. 
           
    \begin{theorem} [Frankl, Kiselev, and Kupavskii] \label{frankl-theorem} Suppose that $\mathcal{F} \subset \binom{[n]}{k}$ is intersecting with $k \geq 2$ and $n \geq 50k^2$. Then    
    	\[|\mathcal{I}(\mathcal{F})| \leq |\mathcal{I}(\mathcal{A})|,\]
    where 
    \[\mathcal{A} = \left\{ A \in \binom{[n]}{k}: |A \cap \{1,2,3\}| \geq 2\right\}.\]
    \end{theorem}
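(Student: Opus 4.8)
\section*{Proof proposal}

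The plan is to compute $\mathcal{I}(\mathcal{A})$ exactly and then bound $|\mathcal{I}(\mathcal{F})|$ by splitting on the covering number $\tau(\mathcal{F})$. One checks directly that
\[\mathcal{I}(\mathcal{A})=\bigl\{I\subseteq[n]\colon 1\le |I|\le k-1,\ I\cap\{1,2,3\}\ne\emptyset\bigr\},\]
since any proper intersection of two members of $\mathcal{A}$ meets $\{1,2,3\}$ (two $2$-subsets of a $3$-set intersect), while conversely every $I$ with $1\le |I|\le k-1$ and $I\cap\{1,2,3\}\ne\emptyset$ is realised as $F\cap G$ by padding $I$ with one more suitably chosen element of $\{1,2,3\}$ and then with disjoint ``new'' elements ($n\ge 50k^2$ leaves ample room). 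Hence
\[|\mathcal{I}(\mathcal{A})|=\sum_{j=1}^{k-1}\Bigl(\tbinom{n}{j}-\tbinom{n-3}{j}\Bigr)=\sum_{j=1}^{k-1}\Bigl(\tbinom{n-1}{j-1}+\tbinom{n-2}{j-1}+\tbinom{n-3}{j-1}\Bigr).\]
Since $\mathcal{I}(\cdot)$ is monotone under inclusion of families, we may assume $\mathcal{F}$ is a maximal intersecting family; as every member is a cover, $\tau=\tau(\mathcal{F})\le k$.

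If $\tau=1$ then $\mathcal{F}$ lies in a full star with centre $x$, every $I\in\mathcal{I}(\mathcal{F})$ contains $x$, and $|\mathcal{I}(\mathcal{F})|\le\sum_{j=1}^{k-1}\binom{n-1}{j-1}<|\mathcal{I}(\mathcal{A})|$. The heart of the proof is $\tau=2$, where $\mathcal{A}$ itself sits. Fix a cover $\{x,y\}$ and split $\mathcal{I}(\mathcal{F})$ into: the sets containing $x$ (at most $\sum_{j=1}^{k-1}\binom{n-1}{j-1}$ of them); the sets containing $y$ but not $x$ (at most $\sum_{j=1}^{k-1}\binom{n-2}{j-1}$); and the sets $I$ with $x,y\notin I$. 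For the last class, note that if $I=F\cap G$ with $x,y\notin I$ then, because $\{x,y\}$ is a cover, exactly one of $F,G$ contains $x$ and not $y$ while the other contains $y$ and not $x$; setting $\mathcal{G}=\{F\setminus\{x\}\colon F\in\mathcal{F},\,x\in F,\,y\notin F\}$ and $\mathcal{H}=\{G\setminus\{y\}\colon G\in\mathcal{F},\,y\in G,\,x\notin G\}$, these are cross-intersecting $(k-1)$-uniform families on $[n]\setminus\{x,y\}$, and the sets in the third class are precisely the distinct pairwise intersections $\{G'\cap H'\colon G'\in\mathcal{G},\,H'\in\mathcal{H}\}$. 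I would then apply a lemma to the effect that for cross-intersecting $\mathcal{G},\mathcal{H}\subseteq\binom{[m]}{r}$ with $m$ of order $r^2$ or larger, the number of distinct intersections is at most $\sum_{i=0}^{r-1}\binom{m-1}{i}$, with equality for $\mathcal{G}=\mathcal{H}$ a star; with $m=n-2$ and $r=k-1$ this bounds the third class by $\sum_{j=1}^{k-1}\binom{n-3}{j-1}$. Summing the three bounds yields exactly $|\mathcal{I}(\mathcal{A})|$. (One can verify that for $\mathcal{F}=\mathcal{A}$ and cover $\{1,2\}$ all three bounds are tight, $\mathcal{G}=\mathcal{H}$ being the star at $3$ inside $[n]\setminus\{1,2\}$.)

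For $\tau\ge 3$ the goal is the strict inequality $|\mathcal{I}(\mathcal{F})|<|\mathcal{I}(\mathcal{A})|$. Here one exploits that a minimum cover $Y=\{y_1,\dots,y_\tau\}$ comes with ``private'' members $F_1,\dots,F_\tau\in\mathcal{F}$ satisfying $F_i\cap Y=\{y_i\}$. Since $\tau\le k$ and every member of $\mathcal{F}$ meets each $F_i$, and since no pair of elements is a cover, one shows that each member is pinned down up to $\mathrm{poly}(k)$ choices inside the bounded universe $Y\cup\bigcup_i(F_i\setminus Y)$ of size at most $k^2$; this forces both $|\mathcal{F}|$ and $|\mathcal{I}(\mathcal{F})|$ to be of order $\mathrm{poly}(k)\binom{n}{k-3}$, which is of strictly smaller order than $\binom{n}{k-2}\le|\mathcal{I}(\mathcal{A})|$ once $n$ exceeds a polynomial in $k$; the constant $50$ is calibrated so that $n\ge 50k^2$ suffices throughout.

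The main obstacle is twofold. First, the cross-intersecting lemma genuinely requires the largeness hypothesis on $m$: for $m$ close to $2r$ the choice $\mathcal{G}=\mathcal{H}=\binom{[2r-1]}{r}$ produces all nonempty subsets of $[2r-1]$ of size $\le r$, which already beats the star, so one must pin down exactly how large $m$ must be — this is the source of the $\Theta(k^2)$-type threshold. Second, in the case $\tau\ge3$ the crude bound $|\mathcal{I}(\mathcal{F})|\le 2^{k}|\mathcal{F}|$ is too wasteful (it would force $n$ to be exponential in $k$), so one must instead convert the structural control coming from the private members $F_i$ into a bound of the form $|\mathcal{I}(\mathcal{F})|\le\mathrm{poly}(k)\,|\mathcal{F}|$ (equivalently, a direct estimate $|\mathcal{I}(\mathcal{F})|=O(\mathrm{poly}(k)\,n^{k-3})$); carrying this out carefully, while keeping all thresholds polynomial in $k$, is the delicate part of the argument.
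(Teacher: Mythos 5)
First, note that the paper does not reprove this statement directly: it is quoted from Frankl--Kiselev--Kupavskii, and the in-paper proof machinery that recovers it (as the $t=1$ case of Theorem \ref{main-theorem}, with an improved threshold) goes through the antichain $\mathcal{B}$ of minimal transversals, the level decomposition $\mathcal{F}^{(s)}\cup\cdots\cup\mathcal{F}^{(k)}$, and the branching bound $|\mathcal{B}^{(\ell)}|\le s\ell(k-t+1)^{\ell-t-1}$ of Lemma \ref{main-lemma} --- the same architecture as the original FKK argument. Your route is genuinely different: a case analysis on the covering number $\tau(\mathcal{F})$. Your exact formula for $|\mathcal{I}(\mathcal{A})|$ is correct (it agrees with Proposition \ref{proposition} at $t=1$), the $\tau=1$ case is fine, and the three-way split in the $\tau=2$ case, together with the reduction of the third class to the cross-intersections of the $(k-1)$-uniform families $\mathcal{G},\mathcal{H}$ on $[n]\setminus\{x,y\}$, is a clean and correct reduction whose three bounds do sum to exactly $|\mathcal{I}(\mathcal{A})|$.

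The difficulty is that both substantive cases rest on unproved assertions that carry essentially all of the content of the theorem. In the $\tau=2$ case, the cross-intersecting lemma (at most $\sum_{i=0}^{r-1}\binom{m-1}{i}$ distinct intersections, with the star extremal) is not a standard citable fact; since $\tau(\mathcal{A})=2$, this is the case where the bound must be exactly tight, so you need a sharp extremal/stability argument for cross-intersecting families --- a statement of the same order of difficulty as Theorem \ref{frankl-theorem} itself, and the source of the quadratic threshold you would then have to calibrate. In the $\tau\ge 3$ case, the key estimate $|\mathcal{I}(\mathcal{F})|=O(\mathrm{poly}(k)\,n^{k-3})$ is asserted but your justification does not stand as written: members of $\mathcal{F}$ are \emph{not} contained in the universe $Y\cup\bigcup_i(F_i\setminus Y)$ --- only a bounded ``core'' of each member lives there, while the remaining $k-O(1)$ elements roam freely over $[n]$ --- and one must show that every $F$ contains one of at most $\mathrm{poly}(k)$ cores of size at least $3$ \emph{and} that the trace $F\cap G\cap U$ is controlled by those cores, before the count $\mathrm{poly}(k)\cdot\sum_{j\le k-3}\binom{n}{j}$ follows. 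Making that precise is exactly what Lemma \ref{antichain-lemma} (every member contains a minimal transversal, all of size $\ge 3$ when $\tau\ge 3$) and Lemma \ref{main-lemma} (there are at most $s\ell k^{\ell-2}$ minimal transversals of size $\ell$) accomplish in the paper, via the decomposition $F\cap F'=(B\cap F')\cup((F\setminus B)\cap F')$. So your proposal is a plausible alternative strategy, but as it stands it is a reduction of the theorem to two unproved lemmas rather than a proof.
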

    
    In this paper, we prove that a similar result also holds for $t$-intersecting families $\mathcal{F}$ of $\binom{[n]}{k}$. More precisely, we prove the following theorem. 
    
    \begin{theorem} \label{main-theorem} There exists some number $f(k, t)$ depending on $k, t$ such that if $n \geq f(k, t)$ and $\mathcal{F} \subset \binom{[n]}{k}$ is $t$-intersecting, then     
    	\[|\mathcal{I}(\mathcal{F})| \leq |\mathcal{I}(\mathcal{A}_t)|,\]
    	where 
    	\[\mathcal{A}_t = \left\{ A \in \binom{[n]}{k}: |A \cap \{1,2,\ldots,t+2\}| \geq t+1\right\},\] 
    and  $f(k, t)$ satisfies the following property:
    \begin{enumerate}
       
       \item if $t=\mathcal{O}(1)$, then $f(k,t)=\Theta (k^{3/2+\epsilon})$, where $\epsilon= \frac{10+t}{2(k-t-2)}$,

       \item if $t=o(k)$, then
            
            \begin{enumerate}

               \item if $t=\Theta(k^{\epsilon^\prime})$ with $\epsilon^\prime \leq 1/4$, then $f(k,t)=\Theta (k^{3/2+\epsilon^\prime})$,
               
               \item if $t=\Theta(k^{\epsilon^\prime})$ with $\epsilon^\prime > 1/4$, then $f(k,t)=\Theta (k^{1+2 \epsilon^\prime})$,

           \end{enumerate}

       \item if $t=\Theta (k)$, then $f(k,t)=\Theta(k^3)$.

    \end{enumerate}

    \end{theorem}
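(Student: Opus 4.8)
The plan is to follow the strategy of Frankl, Kiselev, and Kupavskii behind Theorem~\ref{frankl-theorem}, carry it through for $t$-intersecting families, and be economical enough in the final counting to bring the threshold down to $k^{3/2+o(1)}$. Since $\mathcal{F}\subseteq\mathcal{F}'$ forces $\mathcal{I}(\mathcal{F})\subseteq\mathcal{I}(\mathcal{F}')$, one may assume $\mathcal{F}$ is maximal $t$-intersecting, and --- possibly after shifting-type reductions, as in the intersecting case --- that $\mathcal{F}$ is shifted. Stratify
\[
\mathcal{I}(\mathcal{F})=\bigsqcup_{i=1}^{k-t}\mathcal{I}^{(i)}(\mathcal{F}),\qquad \mathcal{I}^{(i)}(\mathcal{F})=\{F\cap G:F\neq G\in\mathcal{F},\ |F\cap G|=k-i\},
\]
the index stopping at $k-t$ because every pairwise intersection has at least $t$ elements. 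The useful reformulation is that a $(k-i)$-set $S$ lies in $\mathcal{I}^{(i)}(\mathcal{F})$ exactly when the link $\mathcal{F}_S:=\{F\setminus S:S\subseteq F\in\mathcal{F}\}\subseteq\binom{[n]\setminus S}{i}$ contains two disjoint members; in particular $S$ is contained in at least two members of $\mathcal{F}$, and $\mathcal{I}^{(1)}(\mathcal{F})$ is the collection of $(k-1)$-sets of codegree at least $2$ in $\mathcal{F}$. A direct count gives
\[
|\mathcal{I}(\mathcal{A}_t)|=\binom{t+2}{2}\sum_{m=0}^{k-t-1}\binom{n-t-2}{m}+(t+2)\sum_{m=0}^{k-t-2}\binom{n-t-2}{m}+\sum_{m=0}^{k-t-3}\binom{n-t-2}{m},
\]
whose leading term $\binom{t+2}{2}\binom{n-t-2}{k-t-1}$ comes from the layer $i=1$.

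The bulk of the proof is the layer $i=1$: one must establish $|\mathcal{I}^{(1)}(\mathcal{F})|\leq\binom{t+2}{2}\binom{n-t-2}{k-t-1}+(t+2)\binom{n-t-2}{k-t-2}+\binom{n-t-2}{k-t-3}$. I would argue according to the kernel $K:=\bigcap_{F\in\mathcal{F}}F$. If $|K|=t$, then $\mathcal{F}\subseteq S_K$ and every codegree-$\geq 2$ $(k-1)$-set $S$ contains $K$ (from $S+a,S+b\in\mathcal{F}$ one gets $K\setminus S\subseteq\{a\}\cap\{b\}=\emptyset$), so $|\mathcal{I}^{(1)}(\mathcal{F})|\leq\binom{n-t}{k-t-1}$, which is at most the target already for $n\geq 3k$, since $\binom{t+2}{2}\geq 3$. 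If $|K|=t-1$, write $W=K$; then $\mathcal{F}':=\{F\setminus W:F\in\mathcal{F}\}$ is a non-trivial intersecting family of $(k-t+1)$-sets with $\mathcal{I}(\mathcal{F})=\{W\cup S':S'\in\mathcal{I}(\mathcal{F}')\}$, so $|\mathcal{I}(\mathcal{F})|=|\mathcal{I}(\mathcal{F}')|$; the intersecting case --- which I would prove first, in its improved $k^{3/2+o(1)}$-threshold form, and which carries most of the genuinely new input --- bounds $|\mathcal{I}(\mathcal{F}')|$ by $|\mathcal{I}(\mathcal{A}')|$, where $\mathcal{A}'$ is the Frankl--Kiselev--Kupavskii extremal family for $(k-t+1)$-sets, and a term-by-term comparison of the two closed forms (using $\binom{t+2}{2}\geq 3$ and $t+2\geq 3$) shows $|\mathcal{I}(\mathcal{A}')|\leq|\mathcal{I}(\mathcal{A}_t)|$. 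The hard case is $|K|\leq t-2$ (which for $t\geq 2$ includes $\mathcal{A}_t$ itself): here I would use the (shifted) structure of $\mathcal{F}$ and Frankl/Ahlswede--Khachatrian-type bounds for $t$-intersecting families to show that, outside a controlled lower-order set of members, the codegree-$\geq 2$ $(k-1)$-sets lie among the sets $S$ with $|S\cap\{1,\dots,t+2\}|\geq t$ (after relabeling), yielding the inequality.

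For the layers $i\geq 2$, crude bounds suffice because these layers are genuinely lower order. Choosing witnessing pairs shows $\mathcal{I}^{(i)}(\mathcal{F})$ is $(t-3i)$-intersecting, so in the Erd\H{o}s--Ko--Rado range $|\mathcal{I}^{(i)}(\mathcal{F})|\leq\binom{n-t+3i}{k-t+2i}$ when $t>3i$ and $|\mathcal{I}^{(i)}(\mathcal{F})|\leq\binom{n}{k-i}$ always; a complementary estimate bounds $|\mathcal{I}^{(i)}(\mathcal{F})|$ by the size of the $i$-th shadow of a suitable intersecting subfamily through Kruskal--Katona. Summing the better of the two over $i\geq 2$, the total is negligible next to $\binom{t+2}{2}\binom{n-t-2}{k-t-1}$ once $n\geq f(k,t)$: consecutive binomial layers shrink by a factor $(n-k+j+1)/k$, and $f(k,t)$ is exactly what is needed for this factor, raised to the relevant powers across the remaining layers, to absorb the combinatorial overhead. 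For $t=\mathcal{O}(1)$ this amounts to requiring the factor to exceed roughly $k^{1/2+\epsilon}$ with $\epsilon=\frac{10+t}{2(k-t-2)}$, i.e. $n=\Theta(k^{3/2+\epsilon})$; for $t=\Theta(k^{\epsilon'})$ and for $t=\Theta(k)$ the $t$-dependence in the Erd\H{o}s--Ko--Rado threshold and in the overhead raises the requirement to $\Theta(k^{3/2+\epsilon'})$, $\Theta(k^{1+2\epsilon'})$ or $\Theta(k^3)$, matching the three regimes in the statement.

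The main obstacle is the kernel-$\leq t-2$ case of the $i=1$ layer --- the non-trivial, poorly concentrated regime. The naive fallback $|\mathcal{I}^{(1)}(\mathcal{F})|\leq\binom{|\mathcal{F}|}{2}$ is useless, since $|\mathcal{A}_t|\approx(t+2)\binom{n-t-2}{k-t-1}$ makes $\binom{|\mathcal{A}_t|}{2}$ exceed $|\mathcal{I}(\mathcal{A}_t)|$ by a power of $n$; what is needed is a sharp structural description of every $t$-intersecting family that persists down to $n\approx k^{3/2+o(1)}$, combined with an honest accounting of the lower-order terms. Getting that structural input to hold with $n$ as small as $k^{3/2+o(1)}$, rather than the $50k^2$ of Theorem~\ref{frankl-theorem}, is where the improvement --- and essentially all of the difficulty --- resides.
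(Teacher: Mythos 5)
Your outline diverges from the paper's method --- the paper never stratifies $\mathcal{I}(\mathcal{F})$ by intersection size or by kernel; it decomposes $\mathcal{F}$ according to the family $\mathcal{B}(\mathcal{F})$ of minimal $t$-transversals, bounds $|\mathcal{B}^{(\ell)}|\leq s\,\ell\,(k-t+1)^{\ell-t-1}$ via a covering-number lemma, writes each intersection as $(B\cap F')\cup((F\setminus B)\cap F')$ for some $B\in\mathcal{B}$, and finishes by comparing growth rates --- but the more serious issue is that your plan has gaps that do not close as stated. First, the reduction to shifted families is unjustified: saturating $\mathcal{F}$ is harmless because adding sets can only enlarge $\mathcal{I}(\mathcal{F})$, but a compression \emph{replaces} sets, and no monotonicity of $|\mathcal{I}(\cdot)|$ under shifting is known (shifting tends to collapse distinct intersections); this is precisely why Frankl, Kiselev, and Kupavskii work with transversals rather than compressions. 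Second, the heart of your argument --- that for $|K|\leq t-2$ the codegree-$\geq 2$ $(k-1)$-sets lie, up to a lower-order set, among the $S$ with $|S\cap\{1,\dots,t+2\}|\geq t$ after relabeling --- is not proved and is false as a structural statement: the $t$-intersecting families $\{A:|A\cap[t+2r]|\geq t+r\}$ with $r\geq 2$ have empty kernel and their $(k-1)$-element intersections are spread over a $(t+2r)$-set, not a $(t+2)$-set. What is needed there (and what the paper supplies via Lemma \ref{main-lemma}) is a counting bound, not a structural one, and you explicitly concede that this step is where ``essentially all of the difficulty resides.''

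Third, the layers $i\geq 2$ are not disposed of by the bounds you propose. The family $\mathcal{I}^{(i)}(\mathcal{F})$ consists of $(k-i)$-sets and is (at best) $(t-2i)$-intersecting, so the Erd\H{o}s--Ko--Rado bound gives $|\mathcal{I}^{(i)}(\mathcal{F})|\leq\binom{n-t+2i}{k-t+i}$, a quantity that \emph{grows} with $i$ as a power of $n$; already for $i=2$ it exceeds the target main term $\binom{t+2}{2}\binom{n-t-2}{k-t-1}$ by a factor of order $(n/k)^{3}$, and the fallback $\binom{n}{k-i}$ is worse still. So these layers are not ``genuinely lower order'' under your estimates, and the Kruskal--Katona alternative is not developed far enough to judge. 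As it stands the proposal is a research plan with the decisive steps missing; to complete it you would need either to import the transversal machinery of Lemmas \ref{antichain-lemma}--\ref{main-lemma} or to prove a new stability theorem for codegree-$\geq 2$ shadows of $t$-intersecting families valid down to $n\approx k^{3/2+o(1)}$, neither of which is sketched.
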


    \section{Preliminaries}   
    We need certain notions which were introduced in \cite{frankl22}; apart from that we also define some analogous notions for $t$-intersecting families.  
    
    For a family $\mathcal{F}$, we call  
    \[\mathcal{T}(\mathcal{F}) := \{T \in 2^{[n]}: |T| \leq k, |T \cap F| \geq t \text{ for all } F\in \mathcal{F}\}\]
    to be the family of $t$-transversals. Then $\mathcal{F} \subset \mathcal{T}(\mathcal{F})$ if and only if $\mathcal{F}$ is $t$-intersecting. For a non-negative integer $\ell \leq n$ we call $\mathcal{F}^{(\ell)} = \{F \in \mathcal{F}, |F|=\ell\}$ the $\ell$-th level of the family $\mathcal{F}$, and set $\mathcal{F}^{(\leq \ell)} = \bigcup_{i=1}^{\ell} \mathcal{F}^{(\ell)}$. We call a $t$-intersecting family $\mathcal{F}$ to be saturated if the property that $|F \cap G| \geq t$ ceases to hold for addition of any new member to $\mathcal{F}$. It is easy to observe that, an $t$-intersecting family $\mathcal{F} \subset \binom{[n]}{k}$ is saturated if and only if $\mathcal{F} = \mathcal{T}(\mathcal{F})^{(k)}$. So, in the rest of this paper we assume that $\mathcal{F}$ is $t$-intersecting and saturated. A family $\mathcal{B}$ is an antichain if for any $B, B^\prime \in \mathcal{B}$ with $B \subset B^\prime$, then $B=B^\prime$. 
     
    To prove Theorem \ref{main-theorem} we need certain lemmas, which are just the $t$-intersecting analogue of the lemmas proved in \cite{frankl22} (see Lemma 1.3, Lemma 2.2, and Lemma 2.3 of \cite{frankl22}). All the lemmas in this section can be proved by exploiting the following fact: in a $t$-intersecting family $|F \cap G|<t$ works as disjoint sets $F, G$ of an intersecting family. So, we leave the details of the proof to the reader.    
     
    \begin{lemma}\label{antichain-lemma}
    	Let $\mathcal{F} \subset \binom{[n]}{k}$ be a saturated $t$-intersecting family. Let $\mathcal{B}=\mathcal{B}(\mathcal{F})$ be the family of minimal sets in $\mathcal{T}(\mathcal{F})$. Then 
    	\begin{enumerate}
    		\item $\mathcal{B}$ is a $t$-intersecting antichain,    		
    		\item $\mathcal{F} = \left\{D \in \binom{[n]}{k}: \exists B \in \mathcal{B}, B \subset D \right\}$,
    		\item $\mathcal{B}$ contains no sunflower of size $k+1$. 
    	\end{enumerate}
    \end{lemma}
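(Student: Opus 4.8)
The plan is to mimic the proof of Lemma~1.3 of \cite{frankl22}, of which these three assertions are the $t$-intersecting analogues, leaning throughout on two facts: the saturation identity $\mathcal{F}=\mathcal{T}(\mathcal{F})^{(k)}$, and the fact that $n$ is far larger than $k$ (already $n\ge 3k$ suffices, and this is dwarfed by $f(k,t)$), so that any set of size at most $k$ can be completed to a $k$-set while avoiding a prescribed set of size $O(k)$. First I would record the observation underlying parts~(2) and~(1): every $t$-transversal sits inside a member of $\mathcal{F}$. Indeed, if $T\in\mathcal{T}(\mathcal{F})$ and $T\subseteq D\in\binom{[n]}{k}$, then $|D\cap F|\ge|T\cap F|\ge t$ for every $F\in\mathcal{F}$, so $D\in\mathcal{T}(\mathcal{F})^{(k)}=\mathcal{F}$. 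Part~(2) follows at once: the inclusion $\supseteq$ is this observation applied to $B\in\mathcal{B}\subseteq\mathcal{T}(\mathcal{F})$, and $\subseteq$ holds because $\mathcal{F}\subseteq\mathcal{T}(\mathcal{F})$ while in the finite poset $(\mathcal{T}(\mathcal{F}),\subseteq)$ every element lies above a minimal one, i.e.\ a member of $\mathcal{B}$.

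Next I would treat part~(1). That $\mathcal{B}$ is an antichain is immediate, since its members are by definition the minimal elements of $\mathcal{T}(\mathcal{F})$. For the $t$-intersecting property I would in fact prove the stronger statement that $\mathcal{T}(\mathcal{F})$ is $t$-intersecting: given $T,T'\in\mathcal{T}(\mathcal{F})$ with $|T\cap T'|\le t-1$, use $n\ge 3k$ to extend $T$ to a $k$-set $D$ with $D\setminus T$ disjoint from $T'$; then $D\in\mathcal{F}$ by the observation above, yet $|T'\cap D|=|T'\cap T|\le t-1$, contradicting $T'\in\mathcal{T}(\mathcal{F})$. Hence $\mathcal{T}(\mathcal{F})$, and so its subfamily $\mathcal{B}$, is $t$-intersecting.

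Part~(3) is where the real work lies. Suppose $\mathcal{B}$ contains a sunflower $B_0,B_1,\dots,B_k$ with core $Y=B_i\cap B_j$ (for $i\ne j$) and petals $P_i=B_i\setminus Y$; the $P_i$ are pairwise disjoint, each is non-empty because the $B_i$ are distinct members of an antichain, and $|Y|\ge t$ since $\mathcal{B}$ is $t$-intersecting. I claim $Y\in\mathcal{T}(\mathcal{F})$. We have $|Y|\le|B_0|\le k$, so it remains to verify $|F\cap Y|\ge t$ for every $F\in\mathcal{F}$. If $|F\cap Y|\le t-1$ for some such $F$, then using $Y\subseteq B_i$ and $|F\cap B_i|\ge t$ (valid because $B_i\in\mathcal{T}(\mathcal{F})$) we get $|F\cap P_i|=|F\cap B_i|-|F\cap Y|\ge 1$ for every $i=0,\dots,k$; but the sets $F\cap P_i$ are pairwise disjoint subsets of $F$, forcing $|F|\ge k+1$, a contradiction. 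Hence $Y\in\mathcal{T}(\mathcal{F})$ with $Y\subsetneq B_0$, contradicting the minimality of $B_0$.

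I do not anticipate a genuine obstacle; the points needing a little care are all bookkeeping: confirming that the largeness hypothesis on $n$ really licenses the two set-completions used in part~(1) (it does, with room to spare), recording that a $t$-transversal — hence any minimal one — has size at least $t$, and observing that the non-emptiness of the petals is exactly what makes the strict containment $Y\subsetneq B_0$ in part~(3) legitimate. Following the authors' suggestion, the remaining lemmas of this section can then be obtained by transcribing Lemmas~2.2 and~2.3 of \cite{frankl22} with the condition "$|F\cap G|<t$" taking over the role played there by "$F$ and $G$ disjoint".
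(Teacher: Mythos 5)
Your argument is correct and is exactly the $t$-intersecting transcription of Lemma~1.3 of \cite{frankl22} that the paper invokes and leaves to the reader, so it matches the intended proof; all three parts (the transversal-extension argument for the $t$-intersecting property, the minimal-element argument for part (2), and the disjoint-petals counting argument showing the core $Y$ would be a smaller $t$-transversal) are complete and sound. The only point worth noting is that in part (3) you (rightly) use the standard delta-system notion of sunflower with pairwise disjoint petals, which is what the counting argument requires, rather than the paper's looser stated definition of a sunflower as an arbitrary subfamily of some $S_X$.
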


    \begin{lemma}\label{triangle-lemma}
    	Let $\mathcal{F} \subset \binom{[n]}{2}$ be a $t$-intersecting family. Then $\mathcal{F}$ is either a sunflower or a $(t+2)$-triangle of the form $\{\{1,2,\ldots ,t+1\}, \{2,3,\ldots ,t+2\},\{\{1,t+2\}\cup D : D\subset \{2,3,\ldots ,t+1\}, \vert D\vert =t-1\}\}$.
    \end{lemma}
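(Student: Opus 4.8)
The plan is a short case analysis driven by the common core $Y:=\bigcap_{F\in\mathcal F}F$, using the fact that, by the definition of a Sunflower in Section~1, $\mathcal F$ is a sunflower precisely when $|Y|\ge t$. Since every member of $\mathcal F$ has $t+1$ elements, any two distinct members $F,G$ satisfy $t\le|F\cap G|\le t$, i.e.\ $|F\cap G|=t$ exactly; this rigidity is the engine of the whole argument, and for $t=1$ it is just the statement that every two edges of the graph $\mathcal F$ meet. If $|Y|\ge t$ we are done, so assume $|Y|\le t-1$. A nonempty family with at most two members has $|Y|\ge t$ (the intersection of one or two $(t+1)$-sets has size $t+1$ or $t$), so under our assumption $|\mathcal F|\ge 3$.

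The heart of the proof is the claim that any three distinct members $F_1,F_2,F_3\in\mathcal F$ satisfy $|F_1\cap F_2\cap F_3|\le t-1$. Suppose not; since $F_1\cap F_2\cap F_3\subseteq F_i\cap F_j$ and $|F_i\cap F_j|=t$, the triple intersection then coincides with every pairwise intersection, say it equals $Y'$ with $|Y'|=t$, so $F_i=Y'\cup\{x_i\}$ with $x_1,x_2,x_3$ distinct points outside $Y'$. If some $F\in\mathcal F$ failed to contain $Y'$, then $|F\cap F_i|=t$ together with $F_i=Y'\cup\{x_i\}$ would force $x_i\in F$ for each $i$ and $|F\cap Y'|=t-1$, giving $|F|\ge t+2$ --- impossible. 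Hence $Y'\subseteq F$ for all $F\in\mathcal F$, i.e.\ $|Y|\ge t$, contradicting our assumption.

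Given the claim, fix two members and write $F_1=Z\cup\{a\}$, $F_2=Z\cup\{b\}$ with $Z:=F_1\cap F_2$, $|Z|=t$, and $a\ne b$ outside $Z$. For any third member $F_3$ the claim gives $|F_3\cap Z|=|F_1\cap F_2\cap F_3|\le t-1$; combined with $|F_3\cap F_1|=|F_3\cap F_2|=t$ this forces $a,b\in F_3$ and $|F_3\cap Z|=t-1$, so $F_3=\{a,b\}\cup(Z\setminus\{z\})$ for a unique $z\in Z$. Applying this to each member other than $F_1,F_2$ yields
\[\mathcal F\ \subseteq\ \{F_1,F_2\}\cup\bigl\{\{a,b\}\cup(Z\setminus\{z\}):z\in Z\bigr\}.\]
Relabelling $[n]$ so that $Z=\{2,\dots,t+1\}$, $a=1$, $b=t+2$, and noting that $\{Z\setminus\{z\}:z\in Z\}$ is exactly the collection of $(t-1)$-subsets $D$ of $\{2,\dots,t+1\}$, the right-hand side is precisely the $(t+2)$-triangle in the statement, so $\mathcal F$ is a subfamily of it. Finally I would check by a one-line intersection count that this triangle is indeed $t$-intersecting and has core of size $t-1<t$, so it is genuinely not a sunflower and the dichotomy is sharp.

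I expect the main obstacle to be the three-member claim: it is what rules out any ``mixed'' configuration and collapses the problem to linear bookkeeping, after which the only care needed is to carry out the relabelling so that it lands exactly on the displayed form. No tools beyond counting are required, which is consistent with the authors' remark that in a $t$-intersecting family the condition $|F\cap G|<t$ plays the role of disjointness in an intersecting family.
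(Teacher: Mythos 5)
Your proof is correct and essentially complete. Note that the paper gives no proof of this lemma at all: Section 2 declares every lemma there to be a routine $t$-analogue of the corresponding statement in Frankl--Kiselev--Kupavskii and ``leaves the details to the reader.'' Your argument realises that analogy faithfully: the rigidity $|F\cap G|=t$ for distinct $(t+1)$-sets, the key claim that three members cannot share $t$ common elements unless the whole family does (the $t$-analogue of ``three edges through a common vertex force a star''), and the deduction that every member beyond $F_1,F_2$ must equal $\{a,b\}\cup(Z\setminus\{z\})$ are all sound. You also correctly read the (surely typographical) hypothesis $\mathcal F\subset\binom{[n]}{2}$ as $\mathcal F\subset\binom{[n]}{t+1}$, which is how the lemma is actually invoked in the proof of Theorem \ref{main-theorem} (the case $k=t+1$). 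Two small points. First, what you prove is that a non-sunflower $\mathcal F$ is a \emph{subfamily} of the $(t+2)$-triangle, and for $t\ge 3$ that is the correct statement: a proper, non-sunflower subfamily of the triangle exists (omit one of the sets $\{1,t+2\}\cup D$), so the lemma's ``is a $(t+2)$-triangle'' must be read as ``is contained in a $(t+2)$-triangle''; this weaker conclusion is all the paper uses. Second, your closing remark that the triangle has core of size $t-1$ is off: the core of the full triangle is $\bigcap_{z\in Z}\bigl(Z\setminus\{z\}\bigr)=\emptyset$. What matters, and what is true, is only that the core has size less than $t$, so the triangle is genuinely not a sunflower.
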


    To state the following lemma we need some further notions. We denote $s=s(\mathcal{B})$ for $\min\{|B|: B \in \mathcal{B}\}$ and the $t$-covering number $\tau(\mathcal{B})$ for $\min \{|T|: |T \cap B| \geq t \text{ for all } B \in \mathcal{B} \}$. 
    
    \begin{lemma}\label{main-lemma}
    	Let $\ell$ be an integer such that $t+1 \leq \ell \leq k$. Suppose that $\mathcal{F} \subset \binom{[n]}{k}$ is a saturated $t$-intersecting family. Assume that $\mathcal{B}=\mathcal{B}(\mathcal{F})$, $s \geq t+1$, and $\tau(\mathcal{B}^{(\leq \ell)}) \geq t+1$. Then 
    	\begin{equation}\label{bl-equation}
    		|\mathcal{B}^{(\ell)}| 
    		\leq s \cdot \ell \cdot (k-t+1)^{\ell-t-1}.
    	\end{equation}
    \end{lemma}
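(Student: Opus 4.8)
The plan is to mirror the proof of Lemma~2.3 in \cite{frankl22}, exploiting throughout the principle quoted above: in a $t$-intersecting family the relation $|F\cap G|<t$ plays the role of disjointness in an intersecting family. By Lemma~\ref{antichain-lemma}, $\mathcal B=\mathcal B(\mathcal F)$ is a $t$-intersecting antichain with no sunflower of size $k+1$, and I would prove the bound by induction on $\ell$, for all $t\ge 1$ and all $\ell$ with $t+1\le\ell\le k$ simultaneously. If $s>\ell$ then $\mathcal B^{(\ell)}=\emptyset$ and there is nothing to prove, so from now on I assume $s\le\ell$ and fix a set $B_0\in\mathcal B$ with $|B_0|=s$; every $B\in\mathcal B^{(\ell)}$ satisfies $|B\cap B_0|\ge t$, and since $\mathcal B$ is an antichain also $|B\cap B_0|\le s-1$.

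\emph{Base case $\ell=t+1$.} Because $s\ge t+1$, the family $\mathcal B^{(\le t+1)}$ equals $\mathcal B^{(t+1)}$, a $t$-intersecting family of $(t+1)$-sets; by Lemma~\ref{triangle-lemma} it is either a sunflower or the collection of all $(t+1)$-subsets of some $(t+2)$-set. A sunflower of $(t+1)$-sets has a $t$-element core, which would then be a $t$-transversal of $\mathcal B^{(\le t+1)}$ of size $t$, contradicting $\tau(\mathcal B^{(\le t+1)})\ge t+1$. Hence $\mathcal B^{(t+1)}$ is empty or sits inside the $(t+1)$-subsets of a $(t+2)$-set, so $|\mathcal B^{(t+1)}|\le t+2\le s(t+1)$, which is exactly the asserted bound when $\ell=t+1$.

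\emph{Inductive step $t+2\le\ell\le k$.} I would attach to each $B\in\mathcal B^{(\ell)}$ an injective ``certificate'' built in stages. The first stage records an element $z\in B_0\setminus B$ (nonempty by the antichain property): a factor at most $s$. Since $B_0$ is a minimal $t$-transversal of $\mathcal F$, for each possible value of $z$ I fix once and for all a set $W(z)\in\mathcal F$ with $z\in W(z)$ and $|W(z)\cap B_0|=t$ (such a set exists by minimality of $B_0$ together with $|B_0\cap F|\ge t$ for every $F\in\mathcal F$). As $B$ is a $t$-transversal with $z\notin B$, one checks that $B$ must meet $W(z)\setminus B_0$, a set of size exactly $k-t$; the second stage records an element of $B$ lying there: a factor at most $k-t$. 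Now I replace $B_0$ by a canonically chosen member of $\mathcal B$ contained in $W(z)$ and repeat, at each later stage using $\tau(\mathcal B^{(\le\ell)})\ge t+1$ to find, from a list depending only on the portion of $B$ recorded so far, a witness meeting that portion in exactly $t-1$ points; the $t$-intersecting property then forces one further element of $B$ inside a set of size $\le k-t+1$. The recursion terminates once the recorded portion is all of $B$, after at most $\ell$ stages, and recording the number of stages costs an extra factor $\le\ell$. The stages at which the recursion cannot proceed are precisely those where $\mathcal B^{(\le\ell)}$, restricted to the current data, degenerates into a sunflower, and these are bounded directly by $k$ through Lemma~\ref{antichain-lemma}(3). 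Multiplying the factors yields $|\mathcal B^{(\ell)}|\le s\cdot\ell\cdot(k-t+1)^{\ell-t-1}$.

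The hard part will be the bookkeeping in the inductive step: proving that at every stage a witness can be chosen meeting the already-recorded part of $B$ in precisely $t-1$ elements (so the branching there is $k-t+1$ and not $k$), that exactly $t+1$ of the $\ell$ element-choices needed to pin down $B$ are free (so the exponent comes out as $\ell-t-1$), and that the only obstruction to continuing the procedure is the sunflower-degenerate situation — which is where the no-sunflower-of-size-$(k+1)$ hypothesis is used. These verifications are routine in the spirit of \cite{frankl22}, being the $t$-intersecting analogues of the manipulations there, but they are the only genuinely delicate point and occupy the remainder of the proof.
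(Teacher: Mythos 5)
The paper does not actually write out a proof of Lemma~\ref{main-lemma}: it asserts that all lemmas in that section are the $t$-intersecting analogues of Lemmas~1.3, 2.2, 2.3 of \cite{frankl22} and leaves the details to the reader. Your strategy (the encoding/branching argument of \cite{frankl22}, Lemma~2.3, with ``$|F\cap G|<t$'' playing the role of disjointness) is therefore the intended route, and your base case $\ell=t+1$ is essentially complete. But the inductive step has a genuine gap, and it is precisely where the content of the lemma lives: your multiplicative accounting does not produce the stated bound. As you describe it, stage~1 costs a factor $s$ but determines \emph{no} element of $B$ (it records $z\in B_0\setminus B$), and every later stage determines exactly one element of $B$ at multiplicative cost at least $k-t$. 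Since $B$ has $\ell$ elements, that scheme yields something of the shape $s\cdot\ell\cdot(k-t+1)^{\ell-1}$, not $s\cdot\ell\cdot(k-t+1)^{\ell-t-1}$. You assert that ``exactly $t+1$ of the $\ell$ element-choices are free,'' but you give no mechanism by which $t+1$ elements of $B$ get recorded at no cost; in the $t=1$ case of \cite{frankl22} the count closes only because the first element is bought with the factor $s$ and the second with the factor $\ell$, which already accounts for all $t+1=2$ ``cheap'' elements. The direct $t$-analogue of that first step would record a $t$-subset of $B\cap B_0$ at cost $\binom{s}{t}$, which exceeds $s$ once $t+1<s-1$, so matching the stated factor $s$ is itself a nontrivial claim that your certificate does not deliver.

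A second unproved claim is that at every later stage one can choose a witness meeting the already-recorded portion $T$ in \emph{exactly} $t-1$ points. Minimality of $B$ only guarantees, for $T\subsetneq B$, some $F\in\mathcal F$ with $|F\cap T|\le t-1$; if $|F\cap T|=j<t-1$ you must record $t-j$ new elements of $B\cap(F\setminus T)$, at cost $\binom{k-j}{t-j}$ or, done one element at a time, $(k-j)(k-j-1)\cdots(k-t+1)\ge(k-t+1)^{t-j}$ --- never better than the target factor and sometimes worse. So the ``exactly $t-1$'' property, which you flag as routine bookkeeping, is in fact the crux and needs a real argument (or the bound needs to be weakened). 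Until the ledger of which elements of $B$ are paid for by $s$, which by $\ell$, and which by $k-t+1$ is balanced so that exactly $\ell-t-1$ of them cost $k-t+1$, the proposal is a plan rather than a proof.
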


    \section{The proof of Theorem \ref{main-theorem}} 
    
     We begin with estimating the exact size of $\mathcal{I}(\mathcal{A}_t)$. 
    
    \begin{proposition} \label{proposition} We have  
    	\begin{align}\label{at-eqn}
    		&|\mathcal{I}(\mathcal{A}_t)|\\
    		&= \binom{t+2}{t} \sum_{j=0}^{k-t-1} \binom{n-t-2}{j} + \binom{t+2}{t+1} \sum_{j=0}^{k-t-2} \binom{n-t-2}{j} 
    		+ \sum_{j=0}^{k-t-3} \binom{n-t-2}{j}. \nonumber
    	\end{align}  
    \end{proposition}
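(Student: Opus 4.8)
The plan is to count $\mathcal{I}(\mathcal{A}_t)$ by sorting its members according to how much they meet the distinguished set $X=\{1,2,\ldots,t+2\}$. For $I\in\mathcal{I}(\mathcal{A}_t)$, say $I=A\cap B$ with $A,B\in\mathcal{A}_t$, $A\neq B$, write $i=|I\cap X|$ and $j=|I\setminus X|$. First I would note that for any $A,B\in\mathcal{A}_t$ one has $|A\cap B\cap X|\ge |A\cap X|+|B\cap X|-|X|\ge (t+1)+(t+1)-(t+2)=t$, so $i\in\{t,t+1,t+2\}$, and also $|I|\le k-1$ since $A\neq B$ and $|A|=|B|=k$. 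Because $\mathcal{A}_t$ is invariant under every permutation of $[n]$ fixing $X$ setwise, whether a set $Y\cup W$ with $Y\subseteq X$, $W\subseteq[n]\setminus X$ lies in $\mathcal{I}(\mathcal{A}_t)$ depends only on the pair $(|Y|,|W|)=(i,j)$; and $(I\cap X,I\setminus X)$ determines $I$. Hence, if $J_i$ denotes the set of admissible values of $j$ for a given $i$, then $|\mathcal{I}(\mathcal{A}_t)|=\sum_{i=t}^{t+2}\binom{t+2}{i}\sum_{j\in J_i}\binom{n-t-2}{j}$, and it remains only to identify $J_t,J_{t+1},J_{t+2}$.

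To pin down $J_i$, given target parameters $(i,j)$ I would attempt to write $A=I\cup P$, $B=I\cup Q$ with $P=A\setminus I$, $Q=B\setminus I$ disjoint and $|P|=|Q|=m:=k-i-j$. If $i=t+2$ then $I\supseteq X$, so $A,B\in\mathcal{A}_t$ automatically and the only constraint is $m\ge1$ (otherwise $A=I=B$), giving $J_{t+2}=\{0,1,\ldots,k-t-3\}$. If $i=t+1$, again $A,B\in\mathcal{A}_t$ for free and $m\ge1$ is forced, so $J_{t+1}=\{0,1,\ldots,k-t-2\}$. If $i=t$, writing $X\setminus(I\cap X)=\{x_1,x_2\}$, one must put $x_1\in P$ and $x_2\in Q$ so that $|A\cap X|=|B\cap X|=t+1$ while $A\cap B\cap X=I\cap X$; this again needs $m\ge1$, giving $J_t=\{0,1,\ldots,k-t-1\}$ (the value $j=k-t$ being impossible outright since it forces $|I|=k$). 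In each case the stated upper endpoint and every smaller $j$ are realized, as long as $n$ is large enough to choose the required disjoint sets $P,Q$; a short computation shows $n\ge2k$ is more than sufficient, which is far below the threshold $f(k,t)$ relevant to Theorem~\ref{main-theorem}. Substituting $J_t,J_{t+1},J_{t+2}$ into the displayed sum and using $\binom{t+2}{t+2}=1$ gives exactly \eqref{at-eqn}.

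The only genuinely delicate point is the exclusion of the boundary cases: that no intersection has $(i,j)=(t+1,k-t-1)$ and none has $(i,j)=(t+2,k-t-2)$. I would argue these directly. If $i=t+1$ and $j=k-t-1$, then $|I\setminus X|=k-t-1$, so any $A\supseteq I$ with $|A|=k$ either contains $X$, in which case $|A\setminus X|=k-t-2<|I\setminus X|$, a contradiction, or satisfies $A\cap X=I\cap X$ and $A\setminus X=I\setminus X$, forcing $A=I=B$; the case $i=t+2$, $j=k-t-2$ is the same and even simpler. Everything else is bookkeeping with binomial coefficients.
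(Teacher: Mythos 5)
Your proof is correct and follows essentially the same route as the paper's: classify each intersection $I=A\cap A'$ by its trace on $X=\{1,2,\ldots,t+2\}$ (which has size $t$, $t+1$, or $t+2$) and count the admissible sizes of $I\setminus X$ for each case. You are somewhat more explicit than the paper about the symmetry reduction, the realizability of each pair $(i,j)$, and the exclusion of the boundary values of $j$, but the decomposition and the resulting bookkeeping are the same.
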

    
    \begin{proof}
    	Let $A, A^\prime \in \mathcal{A}_t$. As $|A \cap A^\prime \cap \{1,2,\ldots,t+2\}| \geq t$, one has total $\binom{t+2}{t}+\binom{t+2}{t+1}+1$ possibilities for $|A \cap A^\prime \cap \{1,2,\ldots,t+2\}|$. Consider one of such possibilities $A \cap A^\prime \cap \{1,2,\ldots,t+2\} = \{1,2,\ldots,t\}$. Then both $A \cap \{1,2,\ldots,t+2\}$ and $A^\prime \cap \{1,2,\ldots,t+2\}$ are of the form $\{1,2,\ldots,t+2\}\setminus\{x\}$ for some $x \in [t+2]$. As $n$ large enough $A \cap A^\prime = \{1,2,\ldots,t\} \cup B$ for some $B \subset \{t+3,t+4,\ldots,n\}$. Further, as $A \neq A^\prime$ we must have $|B| \leq k-t-1$. Thus, in this particular case there are total $\sum_{i=0}^{k-t-1} \binom{n-t-2}{i}$ possible values for $|A \cap A^\prime|$. Considering all the possibilities for $A \cap A^\prime \cap \{1,2,\ldots,t+2\}$ we obtain
    	\begin{align*}
    		&|\mathcal{I}(\mathcal{A}_t)|\\ 
    		&= \binom{t+2}{t} \sum_{j=0}^{k-t-1} \binom{n-t-2}{j} + \binom{t+2}{t+1} \sum_{j=0}^{k-t-2} \binom{n-t-2}{j} + \sum_{j=0}^{k-t-3} \binom{n-t-2}{j}.
    	\end{align*}     
    \end{proof}

\begin{proof}[Proof of Theorem \ref{main-theorem}]
    Let $k=t+1$. Then by Lemma \ref{triangle-lemma} the family $\mathcal{F}$ is either a $(t+2)$-triangle or a sunflower. In both the cases the theorem is trivial. Thus, we may assume that $k \geq t+2$.
    
    For a $t$-element subset $X$ of $[n]$, let $S_X$ be the complete sunflower. Then
    \begin{align}\label{sx-eqn}
    	|\mathcal{I}(S_X)| 
    	&= \sum_{j=0}^{k-t-1} \binom{n-t}{j} \nonumber\\
    	&= 2\sum_{j=0}^{k-t-1} \binom{n-t-1}{j} - \binom{n-t-1}{k-t-1}\nonumber\\
    	&= 4\sum_{j=0}^{k-t-1} \binom{n-t-2}{j} - 2\binom{n-t-2}{k-t-1} - \binom{n-t-1}{k-t-1}\nonumber\\
    	&= 2\binom{n-t-2}{k-t-1} + 4\sum_{j=0}^{k-t-2} \binom{n-t-2}{j} - \binom{n-t-1}{k-t-1}.
    \end{align}
   Comparing Eq. (\ref{at-eqn}) and Eq. (\ref{sx-eqn}) one can see that $|\mathcal{I}(S_X)| < |\mathcal{I}(\mathcal{A}_t)|$. Thus, we may suppose that $\mathcal{F}$ is not a complete sunflower. This implies $\mathcal{B}^{(t)}=\emptyset$. 
    
    Now, we partition $\mathcal{F}$ as $\mathcal{F} = \mathcal{F}^{(s)} \cup \cdots \cup \mathcal{F}^{(k)}$, where $s=s(\mathcal{B(\mathcal{F})})$ and the subfamilies $\mathcal{F}^{(\ell)}$'s are defined as follows: $F \in \mathcal{F}^{(\ell)}$ if $\ell = \max\{|B|: B \in \mathcal{B}, B \subset F\}$. Set $\mathcal{I}^\ell = \{F \cap G: F \in \mathcal{F}^{(\ell)}, G \in \mathcal{F}^{(s)}\cup \cdots \cup \mathcal{F}^{(\ell)}\}$. Then 
    \[|\mathcal{I}(\mathcal{F})| \leq \sum_{\ell=s}^{k} |\mathcal{I}_\ell|\] 
    
    To calculate $|\mathcal{I}_\ell|$ we recall that, for every $F \in \mathcal{F}^{(\ell)}$ there exist $B \in \mathcal{B}^{(\ell)}$ such that $B \subset F$. Then for any $F^\prime \in \mathcal{F}$, we have
    \[F \cap F^\prime = (B\cap F^\prime) \cup ((F\setminus B)\cap F^\prime).\] 
    Here there are at most $\sum_{i=t}^{\ell} \binom{\ell}{i}$ possibilities for $B\cap F^\prime$ and $(F\setminus B)\cap F^\prime$ can be any subset of $[n]$ of size $k-\ell$. Thus, using Eq. (\ref{bl-equation}) we get
    \begin{align}\label{il-eqn}
    	|\mathcal{I}_\ell| 
    	&\leq |\mathcal{B}^{(\ell)}| \sum_{i=t}^{\ell} \binom{\ell}{i} \sum_{j=0}^{k-\ell} \binom{n}{j} \nonumber\\
    	& < s\ell(k-t+1)^{\ell-t-1} \sum_{i=t}^{\ell} \binom{\ell}{i} \sum_{j=0}^{k-\ell} \binom{n}{j}.
    \end{align}
 
    If $\tau(\mathcal{B}^{(t+1)})=t+1$, then by Lemma \ref{triangle-lemma} we obtain that $\mathcal{F}$ is a triangle. In this case, Theorem \ref{main-theorem} is trivial. Let $\alpha$ be the smallest integer such that $\tau(\mathcal{B}^{(\leq \alpha)}) \geq t+1$. Thus $\alpha \geq t+2$. This implies that the family $\cup_{i=1}^{\alpha-1} \mathcal{F}^{(i)}$ is a sunflower and hence 
    \begin{equation}\label{lessl-eqn}
    	|\bigcup_{i=1}^{\alpha-1} \mathcal{I}_i| 
    	\leq |\mathcal{I}(S_X)| 
    	= 2\binom{n-t-2}{k-t-1} + 4\sum_{j=0}^{k-t-2} \binom{n-t-2}{j} - \binom{n-t-1}{k-t-1}.
    \end{equation}
    On the other hand, by Eq. (\ref{il-eqn}) we have  
    \begin{align}\label{biggerl-eqn}
    	|\bigcup_{\ell=\alpha}^{k} \mathcal{I}_\ell| 
    	&\leq \sum_{\ell=\alpha}^{k} |\mathcal{I}_\ell|\nonumber\\
    	&< \sum_{\ell=\alpha}^{k} \ell^2(k-t+1)^{\ell-t-1} \sum_{i=t}^{\ell} \binom{\ell}{i} \sum_{j=0}^{k-\ell} \binom{n}{j}\nonumber\\
    	&\leq \sum_{\ell=t+2}^{k} \ell^2(k-t+1)^{\ell-t-1} \sum_{i=t}^{\ell} \binom{\ell}{i} \sum_{j=0}^{k-\ell} \binom{n}{j}.
    \end{align}
    Adding Eq. (\ref{lessl-eqn}) and Eq. (\ref{biggerl-eqn}) we obtain
    \begin{align}\label{total-eqn}
    	|\mathcal{I}(\mathcal{F})| 
    	&< 2\binom{n-t-2}{k-t-1} + 4\sum_{j=0}^{k-t-2} \binom{n-t-2}{j} - \binom{n-t-1}{k-t-1}\nonumber\\
    	&\quad +\sum_{\ell=t+2}^{k} \ell^2(k-t+1)^{\ell-t-1} \sum_{i=t}^{\ell} \binom{\ell}{i} \sum_{j=0}^{k-\ell} \binom{n}{j}.
    \end{align}
    We claim that there exist some $f(k,t)$ such that for $n \geq f(k,t)$ we have
    \begin{align*}
        &2\binom{n-t-2}{k-t-1} + 4\sum_{j=0}^{k-t-2} \binom{n-t-2}{j} - \binom{n-t-1}{k-t-1}\\
    	&\quad +\sum_{\ell=t+2}^{k} \ell^2(k-t+1)^{\ell-t-1} \sum_{i=t}^{\ell} \binom{\ell}{i} \sum_{j=0}^{k-\ell} \binom{n}{j}r\\
    	&\leq \binom{t+2}{t} \binom{n-t-2}{k-t-1} + \binom{t+3}{t+1} \sum_{j=0}^{k-t-2} \binom{n-t-2}{j} + \sum_{j=0}^{k-t-3} \binom{n-t-2}{j}\\
    	&= \binom{t+2}{t} \sum_{j=0}^{k-t-1} \binom{n-t-2}{j} + \binom{t+2}{t+1} \sum_{j=0}^{k-t-2} \binom{n-t-2}{j}	
    	+ \sum_{j=0}^{k-t-3} \binom{n-t-2}{j}.
    \end{align*}
Adjusting similar terms equivalently we claim that, for sufficiently large $n$ we have 
\begin{align}\label{threshold-eqn}
	&\sum_{\ell=t+2}^{k} \ell^2(k-t+1)^{\ell-t-1} \sum_{i=t}^{\ell} \binom{\ell}{i} \sum_{j=0}^{k-\ell} \binom{n}{j}\nonumber\\
	&\leq \binom{n-t-1}{k-t-1} + \frac{(t+2)(t+1)-4}{2} \cdot \binom{n-t-2}{k-t-1} \nonumber\\
	&\quad + \frac{(t+3)(t+2)-8}{2} \cdot \sum_{j=0}^{k-t-2} \binom{n-t-2}{j} 
	+ \sum_{j=0}^{k-t-3} \binom{n-t-2}{j}.
\end{align}
We calculate the order of $n$ (if written in the form of power of $k$) for which this inequality holds. 



Suppose that $n=\Theta(k^{s})$ for some real number $s$. Then the growth of $\sum_{j=0}^{k-\ell} \binom{n}{j}$ is $k^{s(k-t-2)}$ which we obtain by putting $\ell=t+2$ in $\sum_{j=0}^{k-\ell} \binom{n}{j}$. On the other hand, the growth of $\sum_{\ell=t+2}^{k} \ell^2(k-t+1)^{\ell-t-1} \sum_{i=t}^{\ell} \binom{\ell}{i}$ is $k^{3k/2-t+2}$ which we obtain by putting $\ell=k$ in $\sum_{\ell=t+2}^{k} \ell^2(k-t+1)^{\ell-t-1} \sum_{i=t}^{\ell} \binom{\ell}{i}$. If $\epsilon \geq \frac{10+t}{2(k-t-2)}$, then $\frac{3}{2} + \epsilon \geq \frac{{3k/2-t+2}}{(k-t-2)}$. Therefore, for any real $s \geq \frac{3}{2}+\epsilon$ with $\epsilon \geq \frac{10+t}{2(k-t-2)}$, the growth of $\sum_{j=0}^{k-\ell} \binom{n}{j}$ is higher than $\sum_{\ell=t+2}^{k} \ell^2(k-t+1)^{\ell-t-1} \sum_{i=t}^{\ell} \binom{\ell}{i}$. 


Now, comparing the growths of both sides of Eq. (\ref{threshold-eqn}) we obtain 
\begin{equation}\label{growth-eqn}
	\mathcal{O}(t^4) \cdot k \cdot \Theta(n^{k-t-2}) \leq \mathcal{O}(t^2) \cdot \Theta(n^{k-t-1}).
\end{equation}



\textbf{Case 1.} If $t=\mathcal{O}(1)$, then the LHS of Eq. (\ref{growth-eqn}) is $k \cdot \Theta(n^{k-t-2})$, where as the RHS of Eq. (\ref{growth-eqn}) is $\Theta(n^{k-t-1})$. Thus by setting $n=\Theta(k^{s^\prime})$ for some $s^\prime$, we see that LHS has growth $k^{s^\prime(k-t-2)+1}$ and RHS has growth $k^{s^\prime(k-t-1)}$. Thus, Eq. (\ref{growth-eqn}) holds for $n \geq k^{\max\{3/2+\epsilon, 1\}} = k^{3/2+\epsilon}$. 


\textbf{Case 2.} If $t=\Theta(k)$, then the LHS of Eq. (\ref{growth-eqn}) has growth $\Theta(k^5) \cdot \Theta(n^{k-t-2})$, where as the RHS of Eq. (\ref{growth-eqn}) has growth $\Theta(k^2) \cdot  \Theta(n^{k-t-1})$. Thus by setting $n=\Theta(k^{s^\prime})$ for some $s^\prime$, we see that LHS has growth $k^{s^\prime(k-t-2)+5}$ and RHS has growth $k^{s^\prime(k-t-1)+2}$. Thus, Eq. (\ref{growth-eqn}) holds for $n \geq k^{\max\{3/2+\epsilon, 3\}}$. As $\epsilon \geq \frac{10+t}{2(k-t-2)}$, for $t < \frac{3}{4}k-4$ we have $\epsilon < \frac{3}{2}$, which further implies $\frac{3}{2}+\epsilon < 3$. Hence, Eq. (\ref{growth-eqn}) holds for $n \geq c \cdot k^3$.


\textbf{Case 3.} If $t = o(k)$, then we have two cases.

\textbf{Subcase 1.} Let $t=\Theta(k^{\epsilon^\prime})$ for some $\epsilon^\prime$ with $\epsilon^\prime \leq \frac{1}{4}$. Then $1+2\epsilon^\prime \leq \frac{3}{2}+\epsilon$ as $\epsilon > 0$. In that case Eq. (\ref{growth-eqn}) is equivalent to $\mathcal{O}(t^2) \cdot \mathcal{O}(k) = \mathcal{O}(k^{1+2\epsilon^\prime}) \leq \Theta(n)$, which holds when $n = \Theta(k^{s^\prime})$ with $s^\prime \geq \max\{\frac{3}{2}+\epsilon, 1+2\epsilon^\prime\} = \frac{3}{2}+\epsilon$. 


\textbf{Subcase 2.} Let $t=\Theta(k^{\epsilon^\prime})$ for some $\epsilon^\prime$ with $\epsilon^\prime = \frac{1}{4}+o(1)$. Then for sufficiently large $k$ we have $1+2\epsilon^\prime \geq \frac{3}{2}+\epsilon$ as $\epsilon^\prime \geq \frac{1}{4} + \frac{1}{2} \cdot \frac{10+\Theta(k^{\epsilon^\prime})}{2(k-\Theta(k^{\epsilon^\prime})-2)}$. In that case Eq. (\ref{growth-eqn}) is equivalent to $\mathcal{O}(t^2) \cdot \mathcal{O}(k) = \mathcal{O}(k^{1+2\epsilon^\prime}) \leq \Theta(n)$, which holds when $n = \Theta(k^{s^\prime})$ with $s^\prime \geq \max\{\frac{3}{2}+\epsilon, 1+2\epsilon^\prime\} = 1+2\epsilon^\prime$.

This completes the proof of the theorem. 
\end{proof}


    \section*{Acknowledgment}
    The authors wish to thank the Institute of Mathematical Sciences, Chennai for the financial support received through the institute postdoctoral program.  The authors would like to thank Dr. Stijn Cambie for his valuable suggestions on the previous draft of this manuscript.

	\bibliographystyle{amsplain}

\end{document}